\newtheorem{thm}{\protect\theoremname}
  \newtheorem{lem}[thm]{\protect\lemmaname}
  \newtheorem{prop}[thm]{\protect\propositionname}
\newtheorem{dfn}[thm]{\protect\definitionname}
  \providecommand{\lemmaname}{Lemma}
  \providecommand{\propositionname}{Proposition}
\providecommand{\theoremname}{Theorem}
\providecommand{\corollaryname}{Corollary}
\providecommand{\definitionname}{Definition}
\begin{document}

\title{A simple case within Nash-Moser-Ekeland theory\footnote{The work is partially supported by the Scientific Fund of Sofia University under grant 
80-10-133/25.04.2018.}}

\author{M. Ivanov\footnote{Radiant Life Technologies Ltd, www.radiant-life-technologies.com; \emph{milen@radiant-life-technologies.com}} \and N. Zlateva\footnote{Faculty of Mathematics and Informatics, Sofia University,
		5, J. Bourchier blvd., 1164 Sofia, BULGARIA; \emph{zlateva@fmi.uni-sofia.bg}}}
\date{\today}

\maketitle

\begin{abstract}
	We present simple and direct proof to an important case of Nash-Moser-Ekeland theorem.
\end{abstract}

\noindent
\textbf{Keywords and phrases:}
surjectivity,   G\^ateaux differentiable function, Fr\'echet space, Nash-Moser-Ekeland theory.

\bigskip\noindent
\emph{AMS Subject Classification}: 49J53, 47H04, 54H25

\thispagestyle{empty}
\newpage

\section{Introduction}

In the work \cite{ekeland} Ekeland proved Nash-Moser type theorem (e.g. \cite{hamilton,pdonmt}) for G\^ateaux differentiable function.

Theorem~\ref{thm:ci} below strengthens \cite[Theorem~1]{ekeland} in the sense that all norms are estimated simultaneously. The conditions we impose on the spaces here are more restrictive, but the cases most important for applications are covered.

Here we present a simple and direct proof of this result which in \cite{NME-arxiv} is a corollary to a more general theorem proved for multivalued maps and requiring more sophisticated techniques.

The paper is organised as follows. In Section~\ref{sec:prelim} we recall some definitions and also what we need from the framework of \cite{NME-arxiv}. In Section~\ref{sec: main} we prove the result mentioned above.

\section{Notations and preliminaries}\label{sec:prelim}
Recall that a \emph{Fr\'echet space} is a Hausdorff complete locally convex topological vector space whose topology can be generated by a translation-invariant metric. An equivalent definition involves seminorms. We focus on a special class of Fr\'echet spaces.

We call \emph{compactly graded} space  any $X=\bigcap_0^\infty X_n$, where $(X_n,\|\cdot\|_n)$'s are nested Banach spaces: $X_{n+1}\subset X_n$, such that the identity operator from $X_{n+1}$ into $X_n$ is compact. The most important example of compactly graded space is $X_n=C^n(\Omega)$, where $\Omega$ is a compact domain in $R^k$.

It is immediate to check that such space equipped with  the translation-invariant metric
\begin{equation}\label{eq:rho}
\rho_X(x,y):=\max_{n\ge 0}\frac{2^{-n}\|x-y\|_{n}}{1+\|x-y\|_{n}}
\end{equation}
is a complete metric space. Therefore, any compactly graded space is a Fr\'echet space. Note that
$$
\lim_{k\to\infty}\rho_X(x_k,x)=0\iff \lim_{k\to\infty}\|x_k-x\|_n = 0,\ \forall n\ge 0.
$$
Denote by $B_{X_n}:=\{x\in X_n:\ \|x\|_n\le 1 \}$  the closed unit ball of $(X_n,\|\cdot\|_n)$.

We will now consider the  properties of compactly graded spaces we will be using. First of all, if $J_n:X_{n+1}\to X_n$ is the identity operator, then $\|\cdot\|_n\le \|J_n\|\|\cdot\|_{n+1}$ and, therefore,
\begin{equation}\label{eq:norm-domination}
\|\cdot\|_0\le a_1\|\cdot\|_1\le\ldots\le a_n\|\cdot\|_n\le\ldots,
\end{equation}
where $a_n = \|J_0\|\|J_1\|\ldots\|J_{n-1}\|$.
\begin{prop}\label{pro:comp-crit}
	Let $X=\bigcap_0^\infty (X_n,\|\cdot\|_n)$ be a compactly graded space. Then $C\subset X$ is compact if and only if it is closed and bounded in each norm $\|\cdot\|_n$.
\end{prop}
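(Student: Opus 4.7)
The plan is to prove each direction separately, with a diagonal subsequence argument being the substantive step.

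For the forward direction, suppose $C$ is compact in $X$. Since $X$ is Hausdorff, $C$ is closed in $X$. For each $n$, the characterization of $\rho_X$-convergence recalled after \eqref{eq:rho} shows that the inclusion $X\hookrightarrow X_n$ is continuous, so $C$ is the continuous image of a compact set in $(X_n,\|\cdot\|_n)$, hence compact there; in particular closed and bounded with respect to every $\|\cdot\|_n$.

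For the reverse direction, suppose $C$ is closed and bounded in each norm $\|\cdot\|_n$. Since $X$ is metrizable by $\rho_X$, it suffices to show sequential compactness. Given $(x_k)\subset C$, I would run the standard diagonal extraction using the compactness of the inclusions $J_n\colon X_{n+1}\to X_n$: the sequence is bounded in $X_1$, so $(J_0 x_k)$ is relatively compact in $X_0$, giving a subsequence $(x_k^{(0)})$ convergent in $\|\cdot\|_0$; this subsequence is still bounded in $X_2$, so a further subsequence $(x_k^{(1)})$ converges in $\|\cdot\|_1$; and so on. The diagonal sequence $y_k:=x_k^{(k)}$ then converges in every $\|\cdot\|_n$. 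Since $X_{n+1}\hookrightarrow X_n$ is continuous, the limits obtained at successive levels coincide, producing a single element $x\in\bigcap_n X_n=X$ with $y_k\to x$ in each $\|\cdot\|_n$, i.e.\ in $\rho_X$. Closedness of $C$ in $X$ (which follows from closedness in $X_0$ via the continuous embedding $X\hookrightarrow X_0$) yields $x\in C$.

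The only nontrivial step is the diagonal extraction; the main conceptual point is that compactness of each $J_n$ is precisely what promotes norm-boundedness at level $n{+}1$ to relative compactness at level $n$, and the diagonal procedure then compounds these gains across all levels simultaneously. The one mild subtlety is verifying that the successive $X_n$-limits actually coincide, but this is immediate from uniqueness of limits and the nesting $X_{n+1}\subset X_n$.
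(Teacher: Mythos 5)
Your proof is correct, and its substantive half coincides with the paper's: the reverse direction is exactly the same nested/diagonal subsequence extraction, using compactness of the embeddings $J_n\colon X_{n+1}\to X_n$ to promote norm-boundedness at level $n+1$ to relative compactness at level $n$, then diagonalizing; you even spell out the two details the paper leaves implicit (coincidence of the successive limits and membership of the limit in $C$). The only genuine divergence is the forward direction: the paper proves boundedness by contradiction, extracting from an unbounded-in-$\|\cdot\|_j$ set a sequence with strictly increasing $j$-norms that admits no finite net with respect to $\rho_X$, hence violates total boundedness; you instead observe that the embedding $X\hookrightarrow X_n$ is continuous (since $\rho_X$-convergence implies $\|\cdot\|_n$-convergence and $X$ is metrizable), so $C$ is compact in each $(X_n,\|\cdot\|_n)$, hence closed and bounded there. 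Your variant is arguably tidier: it yields closedness in each norm $\|\cdot\|_n$ directly, whereas the paper's one-line ``Then it is closed'' addresses closedness only in $X$; the paper's net argument, in exchange, is self-contained and makes the failure of total boundedness concrete.
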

\begin{proof}
	First, let $C$ be compact. Then it is closed.
	
	If $C$ is unbounded in, say $\|\cdot\|_j$, then the we can chose by induction a sequence $(x_k)_1^\infty\subset C$, such that $\|x_{k+1}\|_j > \max\{\|x_i\|_j:\ i=1,\ldots,k\}$. It is clear that $(x_k)_1^\infty$ has no finite $2^{-j}$-net with respect to $\rho_X$.
	
	Conversely, let  $C$ be closed and
	$$
		s_n := \sup \{ \|x\|_n: x\in C\}<\infty,\quad\forall n\in \mathbb{N}\cup \{0\}.
	$$
	Let $(x_k)_1^\infty\subset C$. Since $(x_k)_1^\infty\subset s_1B_{X_1}$ and the latter is compact in $(X_0,\|\cdot\|_0)$, there exists an infinite set $N_0\subset \mathbb{N}$ such that $\|\cdot\|_0{-}\lim_{k\in N_0} x_k$ exists. By induction we can find infinite sets $N_0\supset N_1\supset\ldots$ such that  the limit $\|\cdot\|_i-\lim_{k\in N_i} x_k$ exists for all $i\ge 0$. Chose $k_0\in N_0$ and 
	$$
		k_i\in N_i:\ k_i > k_{i-1},\quad i\ge 1.
	$$
	Clearly the subsequence $(x_{k_i})_{i=0}^\infty$ is convergent in each norm and, therefore, in $X$.	
\end{proof}
For a compactly graded space $X=\bigcap_0^\infty(X_n,\|\cdot\|_n)$ and $A\subset X$ denote
\begin{equation}\label{def:d-n}
	d_n(x,A) := \inf\{\|x-y\|_n:\ y\in A\}.
\end{equation}
\begin{lem}\label{lem:dn}
	Let $X=\bigcap_0^\infty(X_n,\|\cdot\|_n)$ be a compactly graded space and let $C\subset X$ be compact. Then
	$$
		x\in C\iff d_n(x,C) = 0,\quad \forall n\ge 0.
	$$	
\end{lem}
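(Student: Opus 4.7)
The forward implication is immediate: for $x\in C$, taking $y=x$ in (\ref{def:d-n}) gives $\|x-y\|_n=0$ and hence $d_n(x,C)=0$ for every $n\ge 0$.

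For the reverse implication, my plan is to pick a minimizing sequence for a single norm and use compactness of $C$ to upgrade it to a subsequence that converges in the full Fr\'echet topology, then identify the limit with $x$. Concretely, from $d_0(x,C)=0$ choose $(y_k)\subset C$ with $\|x-y_k\|_0\to 0$. By compactness of $C$ in the metric space $X$, some subsequence $(y_{k_j})$ converges in $X$ to a point $z\in C$; in particular $\|y_{k_j}-z\|_0\to 0$. The triangle inequality then forces $\|x-z\|_0=0$, and since $\|\cdot\|_0$ is a norm on $X_0$ while $x$ and $z$ both sit inside $X\subset X_0$, this yields $x=z$ as elements of $X$, so $x=z\in C$.

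The one delicate point I anticipate is the implicit status of $x$. Read strictly, the definition (\ref{def:d-n}) makes $\|x-y\|_n$ a real number only when $x-y\in X_n$, which for $y\in C\subset X_n$ forces $x\in X_n$; imposing $d_n(x,C)=0$ for every $n$ thus already places $x$ inside $\bigcap_{n\ge 0}X_n=X$. Once this is secured, only the single instance $d_0(x,C)=0$ together with the compactness of $C$ are actually needed for the argument above; the remaining hypotheses serve merely to guarantee that $x$ belongs to the ambient space $X$ in the first place.
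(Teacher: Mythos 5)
Your proof is correct, but it takes a genuinely different route from the paper's. The paper never extracts a convergent subsequence: it fixes $\varepsilon>0$, chooses $m$ with $2^{-m}<\varepsilon$, takes a sequence $(y_i)\subset C$ minimizing the \emph{single} norm $\|\cdot\|_m$, and then combines the domination chain \eqref{eq:norm-domination} (every $\|\cdot\|_n$ with $n\le m$ is bounded by $a\|\cdot\|_m$) with the weights $2^{-n}$ in \eqref{eq:rho} (which cap all terms with $n>m$ by $2^{-m}<\varepsilon$) to get $\rho_X(y_i,x)<2\varepsilon$ for large $i$; hence the $\rho_X$-distance from $x$ to $C$ is zero, and compactness of $C$ enters only through the fact that $C$ is closed. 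You instead use compactness at full strength, namely sequential compactness of $C$ in the Fr\'echet topology, to upgrade a $\|\cdot\|_0$-minimizing sequence to a subsequence converging in every norm, and then identify the limit with $x$ through the single norm $\|\cdot\|_0$. The trade-off is instructive: your argument establishes the formally stronger fact that for \emph{compact} $C$ the single condition $d_0(x,C)=0$ already forces $x\in C$, so the hypothesis ``for all $n\ge 0$'' is redundant in that case; the paper's argument needs $d_m(x,C)=0$ for arbitrarily large $m$, but in exchange it works verbatim for any \emph{closed} $C\subset X$, compact or not, which fits the generalisation suggested in the paper's closing Remark. Your side observation about the status of $x$ is also sound: for the quantities $\|x-y\|_n$ in \eqref{def:d-n} to be defined for all $n$ one needs $x\in X_n$ for all $n$, i.e.\ $x\in\bigcap_{n\ge 0}X_n=X$, which is the implicit ambient assumption in the lemma; note, however, that in your strengthened one-norm version this membership must still be assumed (it no longer follows from the hypothesis $d_0(x,C)=0$ alone, though your identification $x=z$ in $X_0$ rescues the conclusion even for $x\in X_0$).
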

\begin{proof}
	Fix $\varepsilon>0$ and then take $m$ so large that $2^{-m} < \varepsilon$. 
	
	Set $a:= \max\{a_m,a_m/a_1,\ldots,a_m/a_{m-1}\}$. From \eqref{eq:norm-domination} it follows that
	$$
		\|\cdot\|_n\le a\|\cdot\|_m,\quad\forall n=0,1,\ldots,m.
	$$
	
	Let $y_i\in C$ be such that $\lim_{i\to\infty} \|y_i - x\| _m \to 0$. 
	Since the function $t\to 1/(1+t)$ is increasing, 
	$$
	   \max_{0\le n\le m} \frac{\|y_i-x\|_n}{2^n(1+\|y_i-x\|_n)} \le \frac{a\|y_i-x\|_m}{1+a\|y_i-x\|_m} \to  0, \mbox{ as }i\to \infty.
	$$
	
	From \eqref{eq:rho} and the choice of $m$   it follows that $\rho_X(y_i,x)<2\varepsilon$ for all $i$ large enough.
	
	This means that the $\rho_X${-}distance between $x$ and $C$ is zero. Since $C$ is compact, $x\in C$.	
\end{proof}

\section{Main result}\label{sec: main}

Let us recall that if $(X,\rho_X)$ and $(Y,\rho_Y)$ are linear metric spaces, and
  $$
    f:X\to Y
  $$
  is a function, then the directional derivative of $f$ at $x$ in direction $h$ is
  $$
   f'(x;h):= \rho_Y{-}\lim_{t\searrow 0}\frac{f(x+th)-f(x)}{t}
  $$
  if the latter limit exists.
  
  If the map $h\to f'(x;h)$ is linear and continuous then $f$ is called G\^ateaux differentiable at $x\in X$.
  
  Denote
  $$
  \mathbb{R}_+^\infty := \{(s_{i})_{i=0}^{\infty}:\ s_{i}\ge0\}.
  $$
  For $\mathbf{u}= (u_{i})_{i=0}^{\infty},\mathbf{s}=(s_{i})_{i=0}^{\infty}\in \mathbb{R}_+^\infty$ define
  $$
	  \mathbf{u}.\mathbf{s} := (u_{i}s_i)_{i=0}^{\infty}.
  $$

\begin{dfn} (\cite{NME-arxiv})		
	For $\mathbf{s}=(s_{i})_{i=0}^{\infty}\in \mathbb{R}_+^\infty$ and a compactly graded space $X=\bigcap_0^\infty (X_n,\|\cdot\|_n)$  define
	\begin{equation}
	\label{eq:def:pi}
	\Pi_{\mathbf{s}}(X):=\{x\in X:\ \|x\|_{n}\le s_{n},\ \forall n\ge 0\}.
	\end{equation}
\end{dfn}

It is clear that $\Pi_{\mathbf{s}}(X)$ is closed, so by Proposition~\ref{pro:comp-crit},
\begin{equation}\label{ps-comp}
	\Pi_{\mathbf{s}}(X)\mbox{ is compact, }\forall \mathbf{s} \in \mathbb{R}_+^\infty. 
\end{equation}

We present a simple proof of the following extension of a result form \cite{NME-arxiv}.
	\begin{thm}\label{thm:ci}
		Let $X=\bigcap_0^\infty(X_n,\|\cdot\|_n)$ and $Y=\bigcap_0^\infty (Y_n,|\cdot|_n)$ be compactly graded spaces. Let $f:X\to Y$ be a continuous and directionally differentiable  function such that $f(0)=0$.
		
		Assume that there are $c_n>0$ and $d\in \{0\}\cup\mathbb{N}$, such that for each $x\in X$ and $v\in Y$
	\begin{equation}\label{eq:f-prime:ld}
			\exists u\in X:\ f'(x;u)=v\mbox{ and }
			\|u\|_n\le c_n|v|_{n+d},\quad\forall n\ge 0.
		\end{equation}	
		Then for each  $y\in Y$ there is $x\in X$ such that
		$$
		f(x)=y\mbox{ and }
		\|x\|_n\le c_n|y|_{n+d},\quad\forall n\ge 0.
		$$		
	\end{thm}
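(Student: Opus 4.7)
The plan is to construct the solution by a Newton-type iteration: for each $\mu \in (0,1)$ I would first produce an approximate solution $\bar{x}_\mu$ with norm bounds relaxed by a factor $1/(1-\mu)$, and then pass to the limit $\mu \searrow 0$ using the compactness of $\Pi$-boxes established in \eqref{ps-comp}.

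Fix $y \in Y$ and set $s_n := c_n |y|_{n+d}$. For a given $\mu$, put $x_0 := 0$; given $x_k$ with $v_k := y - f(x_k) \neq 0$, use \eqref{eq:f-prime:ld} to choose $u_k \in X$ with $f'(x_k;u_k) = v_k$ and $\|u_k\|_n \le c_n |v_k|_{n+d}$. Exploiting that $(f(x_k + t u_k) - f(x_k))/t \to v_k$ in $\rho_Y$ as $t \searrow 0$, select $t_k \in (0,1]$ so small that the residual
\begin{equation*}
r_k := f(x_k + t_k u_k) - f(x_k) - t_k v_k
\end{equation*}
satisfies $|r_k|_n \le \mu\, t_k\, |v_k|_n$ for every $n$ in an initial range that grows with $k$, and set $x_{k+1} := x_k + t_k u_k$. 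From $v_{k+1} = (1 - t_k) v_k - r_k$ one obtains $|v_{k+1}|_n \le (1 - (1-\mu)t_k)|v_k|_n$ in that range; a schedule with $\sum_k t_k = \infty$ forces $|v_k|_n \to 0$ for every $n$, hence $f(x_k) \to y$ in $\rho_Y$. The standard telescoping identity $\sum_k \sigma_k \prod_{i<k}(1-\sigma_i) = 1$ with $\sigma_k := (1-\mu)t_k$ then yields
\begin{equation*}
\|x_{k+1}\|_n \le \sum_{j \le k} t_j \|u_j\|_n \le c_n \sum_j t_j |v_j|_{n+d} \le \frac{c_n |y|_{n+d}}{1-\mu},
\end{equation*}
so $(x_k) \subset \Pi_{\mathbf{s}/(1-\mu)}(X)$, a compact set by \eqref{ps-comp}; a $\rho_X$-convergent subsequence gives $\bar{x}_\mu$ with $f(\bar{x}_\mu) = y$ by continuity of $f$ and $\|\bar{x}_\mu\|_n \le s_n/(1-\mu)$ by $\rho_X$-continuity of each norm.

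Finally, all $\bar{x}_\mu$ with $\mu \le 1/2$ lie in the fixed compact $\Pi_{2\mathbf{s}}(X)$, so any $\rho_X$-accumulation point as $\mu \searrow 0$ is the required $\bar{x}$ with $f(\bar{x}) = y$ and $\|\bar{x}\|_n \le s_n = c_n |y|_{n+d}$. The main obstacle lies in the inner step: the $\rho_Y$-convergence of the difference quotient only delivers quantitative control of $|r_k|_n$ in a finite range of $n$ per chosen $t_k$, so the schedule must be arranged so that the controlled range grows to include every fixed $n$ eventually, while still preserving the summability that drives $|v_k|_n \to 0$ and the telescoping bound on $\|x_k\|_n$. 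This interlocking of tail control with step-size selection is the most delicate technical point; if it proved unwieldy, I would fall back on applying Ekeland's variational principle to $\varphi(x) := \rho_Y(f(x),y)$ on the slightly enlarged compact $\Pi_{(1+\mu)\mathbf{s}}(X)$, using \eqref{eq:f-prime:ld} at an approximate minimizer to produce a descent direction, and again letting $\mu \searrow 0$.
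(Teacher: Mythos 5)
Your proposal contains two genuine gaps, and the one you flag as ``the most delicate technical point'' is in fact joined by a more basic one that your setup cannot repair. First, the step-size schedule $\sum_k t_k=\infty$ is not at your disposal: at each $x_k$, directional differentiability only guarantees the residual bound $|r_k|_n\le \mu t_k|v_k|_n$ for $t_k$ \emph{below} a threshold $\tau_k$ determined by the behaviour of $f$ at $x_k$, and nothing prevents $\sum_k \tau_k<\infty$. In that case the iteration stalls: $|v_k|_n\ge |y|_n\prod_j(1-t_j)$ stays bounded away from $0$, and your argument that $f(x_k)\to y$ collapses. Repairing this requires restarting the iteration at the limit point of a stalled run and iterating transfinitely --- which is precisely what the paper's proof does in continuous form: it defines $I:=\{t\ge 0:\ d_n(r\bar y, f(r\Pi_{\mathbf{s}}(X)))\le r\varepsilon,\ \forall r\in[0,t]\}$, uses compactness of $\bar t\,\Pi_{\mathbf{s}}(X)$ to extract a limit point $\bar x$ at $\bar t=\sup I$, and invokes differentiability \emph{at $\bar x$} to push past $\bar t$, so that no summability of step sizes is ever needed.

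Second, the interlocking problem you flag is not a technicality but a genuine failure of your estimates as written. Since a single $t_k$ can control $|r_k|_n$ only for finitely many $n$, for any fixed $n$ beyond the initially controlled range the early residuals $r_k$ are completely uncontrolled in $|\cdot|_{n+d}$; they inflate $|v_j|_{n+d}$ by constants bearing no relation to $|y|_{n+d}$, the telescoping bound $\sum_j t_j|v_j|_{n+d}\le |y|_{n+d}/(1-\mu)$ fails for such $n$, and consequently the claim that all $\bar x_\mu$, $\mu\le 1/2$, lie in the fixed compact $\Pi_{2\mathbf{s}}(X)$ is unjustified. The paper's missing idea here is a \emph{decoupling of the norms}: fix one $n$ and one $\varepsilon$ at a time, and aim the derivative at the \emph{fixed} target $\bar y\in\Pi_{\mathbf{b}.\mathbf{s}}(Y)$ rather than at the moving residual $y-f(x_k)$. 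Then every increment $h$ furnished by \eqref{eq:f-prime:ld} lies in the fixed box $\Pi_{\mathbf{s}}(X)$, so the norm budget grows exactly linearly in \emph{all} norms simultaneously ($\bar x + rh\in(\bar t+r)\Pi_{\mathbf{s}}(X)$), while the error is tracked only in the single norm $\|\cdot\|_n$; the all-norm conclusion is recovered at the end from the family of single-norm approximations via compactness of $f(\Pi_{\mathbf{s}}(X))$ and Lemma~\ref{lem:dn}. Your fallback via Ekeland's variational principle on $\Pi_{(1+\mu)\mathbf{s}}(X)$ inherits an analogous defect: at an approximate minimizer on the boundary of the box, the descent point $\hat x+tu$ with $\|u\|_m\le c_m|y-f(\hat x)|_{m+d}$ need not remain in the box (the slack in infinitely many norm constraints can be arbitrarily small relative to $\|u\|_m$), so the variational inequality cannot be contradicted; the ray-scaling of the constraint set in the paper's proof is exactly what dissolves this boundary obstruction.
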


\begin{proof}
Since $Y=\bigcap_{n=d}^\infty (Y_n,|\cdot|_n)$, we can assume without loss of generality that $d=0$. Indeed, set $m:=n-d$, $n=d,d+1,\ldots$ and work with the new $m$-indexation in $Y$.

For $A\subset X$ define
$$
	f'(x,A) := \{f'(x;h):\ h\in A\}.
$$	
	With such notation condition \eqref{eq:f-prime:ld} can be rewritten as
	\begin{equation}\label{eq:f-prime:ld:rewritten}
			\Pi_{\mathbf{b}.\mathbf{s}}(Y)\subset f'(x;\Pi_{\mathbf{s}}(X)), \quad\forall x\in X,\ \forall \mathbf{s}\in \mathbb{R}_+^\infty,
	\end{equation}
	where 
	$$
		\mathbf{b} = (c_n^{-1})_{n=0}^\infty.
	$$
	Indeed, if $v\in \Pi_{\mathbf{b}.\mathbf{s}}(Y)$ then $|v|_n\le c_n^{-1}s_n$ and from \eqref{eq:f-prime:ld} it follows that there is $u\in X$ such that $f'(x;u) = v$ and $\|u\|_n\le c_n|v|_n\le s_n$. That is, $u\in \Pi_{\mathbf{s}}$ and $v\in f'(x;\Pi_{\mathbf{s}}(X))$.

We will show that
\begin{equation}\label{eq:conclusion:rewritten}
	\Pi_{\mathbf{b}.\mathbf{s}}(Y)\subset f(\Pi_{\mathbf{s}}(X)),\quad \forall \mathbf{s}\in \mathbb{R}_+^\infty.
\end{equation}

So, fix $ \mathbf{s}\in \mathbb{R}_+^\infty$ and $\bar y \in  \Pi_{\mathbf{b}.\mathbf{s}}(Y)$. 

In view of Lemma~\ref{lem:dn} it is enough to show that $d_n(\bar y, f(\Pi_{\mathbf{s}}(X))) \le \varepsilon$ for each $n\ge 0$ and $\varepsilon>0$, where $d_n$ is defined by \eqref{def:d-n}, of course, in $Y$.
 
Fix arbitrary $n\ge 0$ and $\varepsilon>0$. Define
  $$
    I := \{t\ge 0:\ d_n(r\bar y,f(r\Pi_{\mathbf{s}}(X)) \le r\varepsilon,\quad \forall  r\in[0,t]\},
  $$
 Obviously $0\in I$. Denote $
    \bar t := \sup_{t\in I} t$.

  Suppose that $\bar t < \infty$. Then there are $t_i\nearrow\bar t$ and $x_i\in t_i\Pi_{\mathbf{s}}(X)$, such that
  $$
    \|t_i \bar y - f(x_i)\|_n\le t_i\varepsilon.
  $$

 \begin{figure}[h]
 \centering
 \includegraphics[width=0.8\textwidth]{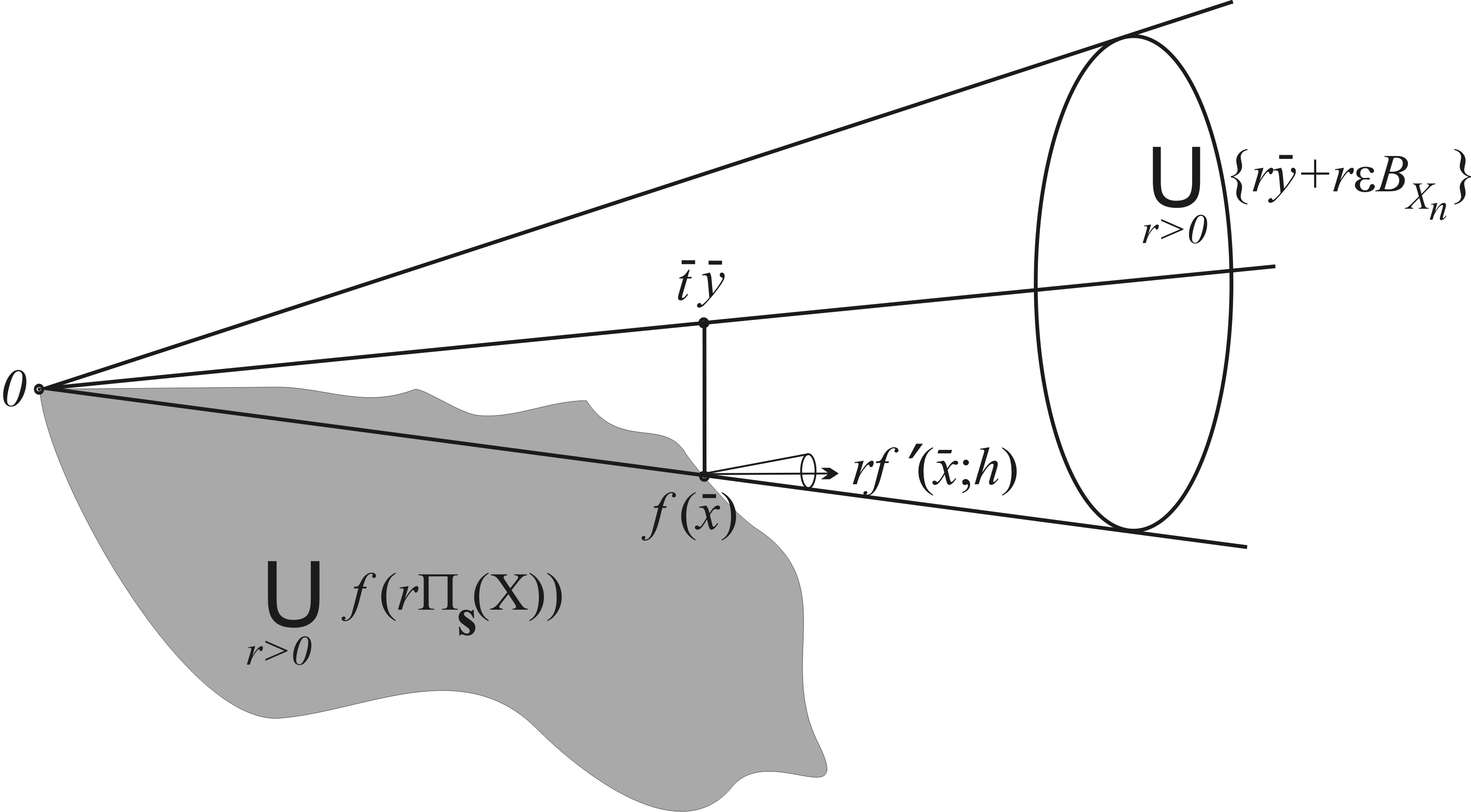}
 \end{figure}

The set  $\bar t \Pi_{\mathbf{s}}(X)$ is compact (see \eqref{ps-comp}) and $x_i\in \bar t \Pi_{\mathbf{s}}(X)$, so we may assume without loss of generality that  $x_i\to\bar x\in\bar t \Pi_{\mathbf{s}}(X)$. Since $f$ is continuous, $f(x_i)\to f(\bar x)$ and, therefore, $\|\bar t \bar y - f(\bar x)\|_n\le \bar t\varepsilon$.

Since  $\bar y\in \Pi_{\mathbf{b}.\mathbf{s}}(Y)\subset f'(\bar x ) (\Pi_{\mathbf{s}}(X))$, see \eqref{eq:f-prime:ld:rewritten}, there is $h\in \Pi_{\mathbf{s}}(X)$ such that $f'(\bar x; h)=\bar y$.

  From the definition of the directional derivative $f'(\bar x; h)$ and \eqref{eq:rho} it follows that
  $$
    0=\lim_{r\searrow 0} \left\|\frac{f(\bar x + rh) -f(\bar x)- rf'(\bar x;h)}{r}\right\|_n = \lim_{r\searrow 0}\frac{\|f(\bar x + rh) -f(\bar x)- r\bar y\|_n}{r} .
  $$  
  Thus for some $\delta>0$ and all $r\in [0,\delta]$ we have that $\|f(\bar x + rh)-f(\bar x)-r\bar y\|_n \le r\varepsilon$. So,
  $\bar x + rh \in (\bar t +r) \Pi_{\mathbf{s}}(X)$ and
  \begin{eqnarray*}
  	d_n((\bar t +r)\bar y,f((\bar t + r)\Pi_{\mathbf{s}}(X))) &\le&
  	\|\bar t\bar y  +r\bar y - f(\bar x + rh)\|_n\\
  	&\le& \|\bar t \bar y - f(\bar x)\|_n + \|f(\bar x + rh)-f(\bar x)-r\bar y\|_n \\
  	&\le& (\bar t + r)\varepsilon.
  \end{eqnarray*}
  That is, $\bar t + \delta\in I$, contradiction. Therefore, $\bar t =\infty$. In particular $1\in I$, thus
  $$
  	d_n(\bar y,f(\Pi_{\mathbf{s}}(X)) \le \varepsilon,
  $$
  yielding \eqref{eq:conclusion:rewritten}, which is easily seen to be equivalent to the conclusion of the theorem. 
\end{proof}

\medskip\noindent

\textbf{Remark.} We have presented Theorem~\ref{thm:ci} in a simple practical context. It can be easily somewhat generalised. For example, we may assume only that $X$ and $Y$ are Fr\'echet spaces with Heine-Borel property, that is, each closed and bounded subset is compact. (The roundedness is understood in the sense of locally convex spaces -- i.e. the set is absorbed by any neighbourhood of zero -- and not with respect to a metric, see e.g. \cite[pp.110-114]{fabetal} for details.)

In such setting $\Pi_{\mathbf{s}}(X)$ is compact for each $\mathbf{s}\in\mathbb{R}_+^\infty$ and this is all we need. The space $Y$ can even be arbitrary Fr\'echet space, see \cite{NME-arxiv}, but the latter extension would require minor modifications to the proof.

\section*{Acknowledgment} Thanks are due to an anonymous referee for numerous remarks considerably improving the quality of this article. From these notable are: pointing out that our definition of differentiability was more restrictive than the standard one, suggesting the name \textit{compactly graded space}, hinting at generalising Theorem~\ref{thm:ci} to sequence of $c$'s.

\end{document}